\DeclareMathOperator{\dist}{dist}
\DeclareMathOperator{\supp}{supp}
\DeclareMathOperator{\HS}{\mathcal{H}}
\DeclareMathOperator{\BMO}{\mathrm{BMO}}
\newtheorem{Def}{Definition}
\renewcommand{\leq}{\leqslant}
\renewcommand{\geq}{\geqslant}
\newcommand{\scalprod}[2]{\langle{#1},{#2}\rangle}
\theoremstyle{plain}\newtheorem{Th}{Theorem}
\theoremstyle{plain}\newtheorem{Le}{Lemma}
\theoremstyle{plain}\newtheorem{Cor}{Corollary}
\theoremstyle{plain}\newtheorem{St}{Proposition}
\begin{document}
\title{Dorronsoro's theorem and a small generalization.}
\author{D. M. Stolyarov\thanks{Supported by RSF grant 14-41-00010.}}

\maketitle
\begin{abstract}
We give a simple proof of Dorronsoro's theorem \textup(Theorem~$2$ in~\cite{Dorronsoro}\textup) and use similar ideas to establish an equivalence for embeddings of vector fields.
\end{abstract}
\section{Introduction}
\begin{Th}[Dorronsoro's theorem\textup, Theorem~$2$ in~\cite{Dorronsoro}]\label{DorronsoroTheorem}
For any real-valued function~$f \in C_0^{\infty}(\mathbb{R}^d)$\textup,~$d \geq 2$\textup, there exists a real-valued function~$F\in \HS_1$ such that
\begin{equation*}
I_1[F] \geq f;\quad \|F\|_{\HS_1} \lesssim \|\nabla f\|_{L_1}.
\end{equation*}
\end{Th}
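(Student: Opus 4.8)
The exact choice $F=(-\Delta)^{1/2}f$ already gives $I_1[F]=f$, but its $\HS_1$-norm is comparable to $\|(-\Delta)^{1/2}f\|_{L_1}+\|\nabla f\|_{L_1}$, and the first summand is \emph{not} controlled by $\|\nabla f\|_{L_1}$; the inequality $I_1[F]\geq f$ (rather than equality) is precisely the slack one has to exploit. I would dualize. Since $(\HS_1)^{*}=\BMO$ and $I_1$ is formally self-adjoint, a routine Hahn--Banach/separation argument shows that the conclusion of the theorem holds, with the same implicit constant, as soon as
\begin{equation*}
\int_{\mathbb{R}^d} f\,d\mu \;\lesssim\; \|\nabla f\|_{L_1}\,\|I_1\mu\|_{\BMO}
\qquad\text{for every nonnegative finite measure }\mu .
\end{equation*}
Indeed, if no admissible $F$ existed, the cone $\{F:I_1[F]\geq f\}$ would be separated from a ball of $\HS_1$ by some $\ell\in\BMO$; on the cone one has all the elements $(-\Delta)^{1/2}(f+g)$ with $g\in C_0^\infty$, $g\ge0$, so finiteness of $\inf\langle\ell,F\rangle$ forces $\langle(-\Delta)^{1/2}\ell,g\rangle\ge0$ for all such $g$, i.e.\ $(-\Delta)^{1/2}\ell$ is a nonnegative measure $\mu$ (so $\ell=I_1\mu$ up to a constant) and that infimum equals $\int f\,d\mu$, contradicting the displayed bound.

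It remains to prove the displayed inequality, and here I would first pass to a purely geometric statement. From $f^{+}=\int_0^{\infty}\mathbf{1}_{\{f>t\}}\,dt$ we get $\int f\,d\mu\leq\int_0^{\infty}\mu(\{f>t\})\,dt$, while the coarea formula gives $\int_0^{\infty}\mathrm{Per}(\{f>t\})\,dt\leq\|\nabla f\|_{L_1}$; moreover a.e.\ superlevel set of $f\in C_0^{\infty}$ is a bounded domain with smooth boundary (Sard). Hence it suffices to establish
\begin{equation*}
\mu(E)\;\lesssim\;\mathrm{Per}(E)\,\|I_1\mu\|_{\BMO}
\end{equation*}
for every bounded open $E$ with smooth boundary and every nonnegative $\mu$; an approximation argument then removes the smoothness. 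This reduction is the place where $d\geq 2$ first matters, since the kernel of $I_1$ is the positive power $|x|^{-(d-1)}$.

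For the last inequality set $u=I_2\mu$, the Newtonian potential of $\mu$, so that $-\Delta u=c_d\,\mu$ and Green's formula yields $\mu(E)=-c_d^{-1}\int_{\partial E}\partial_{\nu}u\,d\mathcal{H}^{d-1}$. Since $(-\Delta)^{1/2}u$ is a multiple of $I_1\mu$ and the Riesz transforms act boundedly on $\BMO$, we have $\|\nabla u\|_{\BMO}\lesssim\|I_1\mu\|_{\BMO}$. The difficulty, and the heart of the matter, is that $\nabla u$ need only lie in $\BMO$, not in $L_{\infty}$: it can be genuinely unbounded (already for $\mu$ with density $|x|^{-1}$), so the crude bound $\bigl|\int_{\partial E}\partial_{\nu}u\,d\mathcal{H}^{d-1}\bigr|\leq\|\nabla u\|_{L_\infty}\,\mathrm{Per}(E)$ is not available and one must use the cancellation $\int_{\partial E}\nu\,d\mathcal{H}^{d-1}=0$. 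My scheme would be: take a Whitney-type cover of a neighbourhood of $\partial E$ by balls $B_i$ adapted to the local geometry of $\partial E$, replace $\nabla u$ on $B_i$ by its mean $c_i$, and use $\int_{\partial E}\nu=0$ together with a discrete summation by parts to rewrite $\int_{\partial E}(\nabla u-c_i)\cdot\nu\,d\mathcal{H}^{d-1}$ as an interior integral of a second-order expression, which is then estimated against $\mathrm{Per}(E)\,\|\nabla u\|_{\BMO}$ via the Carleson-measure/John--Nirenberg structure of $\BMO$ and the fact that $\nabla u$ is a curl-free field with sign-definite divergence. Making this reassembly quantitative — in particular absorbing the logarithmic factors that a naive telescoping of $\BMO$-averages across scales produces — is the main obstacle I expect, and carrying it out cleanly is exactly what turns the otherwise formal chain above into a proof.
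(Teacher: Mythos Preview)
Your duality reduction to $\int f\,d\mu \lesssim \|\nabla f\|_{L_1}\,\|I_1\mu\|_{\BMO}$ is valid (the paper runs essentially this minimax argument in Section~\ref{sGeneral}, via Ky~Fan's theorem, to prove a more general equivalence), and the subsequent coarea reduction to $\mu(E)\lesssim \mathrm{Per}(E)\,\|I_1\mu\|_{\BMO}$ for bounded open $E$ with smooth boundary is fine. The genuine gap is your last step: the Green's-formula / Whitney-cover / $\BMO$-trace scheme you outline is not carried out, and the logarithmic losses you yourself flag are a real obstruction on that path. But the inequality you are after has a two-line proof you are overlooking. By Adams's theorem (Theorem~\ref{AdamsTheorem}) one has $\|I_1\mu\|_{\BMO}\gtrsim\|M_1[\mu]\|_{L_\infty}$ for nonnegative $\mu$; and for any cover of $E$ by balls $B(x_j,r_j)$,
\[
\mu(E)\leq\sum_j\mu\big(B(x_j,r_j)\big)\leq\|M_1[\mu]\|_{L_\infty}\sum_j r_j^{\,d-1},
\]
so $\mu(E)\lesssim H^{d-1}_\infty(E)\,\|I_1\mu\|_{\BMO}$. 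Gustin's boxing inequality $H^{d-1}_\infty(E)\lesssim H^{d-1}(\partial E)=\mathrm{Per}(E)$ then closes the argument, with no Whitney decomposition and no logarithms to absorb.

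The paper's proof of Theorem~\ref{DorronsoroTheorem}, by contrast, avoids duality altogether and is explicitly constructive---indeed the introduction advertises this as its point, Dorronsoro's original proof having been duality-based much as yours is. The paper first shows directly (Lemma~\ref{TestFunctionLemma}) that $(-\Delta)^{1/2}\theta\in\HS_1$ for any $\theta\in C_0^\infty$; rescaling gives, for each ball $B$, a function $\Theta_B\in\HS_1$ with $I_1[\Theta_B]\geq\chi_B$ and $\|\Theta_B\|_{\HS_1}\lesssim r_B^{\,d-1}$. One then covers each superlevel set $\{f>j\}$ by a near-optimal family of balls and takes $F$ to be the sum of the corresponding $\Theta_B$'s; Gustin's inequality plus the coarea formula bound $\|F\|_{\HS_1}$. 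So the geometric core---Gustin's boxing inequality---is common to both routes; what differs is whether it is invoked after passing to the dual problem (your route) or used to assemble $F$ by hand (the paper's).
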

We denote the space of all compactly supported smooth functions by~$C_0^{\infty}$, the real Hardy class by~$\HS_1$ (we address the reader to the book~\cite{Stein} where he can find all the material about the Hardy class~$\HS_1$ and the~$\BMO$ space) and the Riesz potential of order~$a$ by~$I_{a}$,
\begin{equation*}
I_{a}[f] = f*c_a|\cdot|^{a - d},\quad f \in C_0^{\infty}(\mathbb{R}^d),\; a \in (0,d).
\end{equation*}
Here~$c_a$ is the constant such that~$I_{a}$ is the Fourier multiplier with the symbol~$|\xi|^{-a}$. Surely, the Riesz potentials may be applied to a function belonging to~$\HS_1$. Here and in what follows,~``$a \lesssim b$'' means~``$a \leq cb$ for a uniform constant~$c$''. We also always assume~$d \geq 2$.

In the original formulation of Theorem~\ref{DorronsoroTheorem},~$f$ belongs to the homogeneous space~$\mathrm{BV}$ of functions of bounded variation (and the~$L_1$-norm of the gradient in the estimate is replaced by its total variation). This more general statement easily follows from Theorem~\ref{DorronsoroTheorem} by approximation. Though Theorem~\ref{DorronsoroTheorem} may seem a bit sophisticated, we give a corollary that emphasizes its importance.
\begin{Cor}\label{LorentzSobolevEmbedding}
$\dot{W}_1^1(\mathbb{R}^d) \hookrightarrow L_{\frac{d}{d-1},1}(\mathbb{R}^d)$.
\end{Cor}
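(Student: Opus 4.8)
The plan is to derive the corollary from Theorem~\ref{DorronsoroTheorem} together with one classical auxiliary fact: the Riesz potential of order one maps the Hardy class boundedly into a Lorentz space,
\begin{equation*}
I_1\colon \HS_1(\mathbb{R}^d)\longrightarrow L_{\frac{d}{d-1},1}(\mathbb{R}^d).
\end{equation*}
Set $q=\frac{d}{d-1}$; note $q>1$ because $d\geq2$. Granting the displayed mapping property, I would argue as follows for $f\in C_0^{\infty}(\mathbb{R}^d)$. Apply Theorem~\ref{DorronsoroTheorem} to $f$ and, separately, to $-f$, getting $F_1,F_2\in\HS_1$ with $I_1[F_1]\geq f$, $I_1[F_2]\geq-f$, and $\|F_1\|_{\HS_1}+\|F_2\|_{\HS_1}\lesssim\|\nabla f\|_{L_1}$. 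Pointwise this yields $|f|=\max(f,-f)\leq\max\bigl(I_1[F_1],I_1[F_2]\bigr)\leq|I_1[F_1]|+|I_1[F_2]|$, so, using that for $q>1$ the space $L_{q,1}$ carries an equivalent norm satisfying the triangle inequality and that the Lorentz quasinorm is monotone,
\begin{equation*}
\|f\|_{L_{q,1}}\lesssim\|I_1[F_1]\|_{L_{q,1}}+\|I_1[F_2]\|_{L_{q,1}}\lesssim\|F_1\|_{\HS_1}+\|F_2\|_{\HS_1}\lesssim\|\nabla f\|_{L_1}.
\end{equation*}
Passing from $C_0^{\infty}$ to all of $\dot{W}_1^1(\mathbb{R}^d)$ is then routine: if $f_n\in C_0^{\infty}$ and $\|\nabla(f-f_n)\|_{L_1}\to0$, the inequality makes $(f_n)$ Cauchy in $L_{q,1}$, its limit is $f$ (a nonzero constant cannot belong to $L_{q,1}$), and the estimate survives.

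For the auxiliary fact I would use the atomic decomposition of $\HS_1$. Since $L_{q,1}$ is a Banach space, it suffices to prove $\|I_1[a]\|_{L_{q,1}}\lesssim1$ uniformly over $\HS_1$-atoms $a$, and by dilation and translation invariance of this bound one may assume $a$ is supported in $B=B(0,1)$ with $\|a\|_{\infty}\leq|B|^{-1}$ and $\int a=0$. Split $\mathbb{R}^d=B(0,2)\cup\{|x|\geq2\}$. On $\{|x|\geq2\}$ the cancellation $\int a=0$ and the mean value theorem give $|I_1[a](x)|\lesssim|x|^{-d}\int_B|y|\,|a(y)|\,dy\lesssim|x|^{-d}$, and $\|\,|x|^{-d}\mathbf{1}_{|x|\geq2}\,\|_{L_{q,1}}$ is a finite universal constant (a short computation with the distribution function). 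On $B(0,2)$, a set of finite measure, I would use the inclusion $L_{r_1}(B(0,2))\hookrightarrow L_{q,1}(B(0,2))$, valid for any $r_1>q$, together with the Hardy--Littlewood--Sobolev inequality $\|I_1[a]\|_{L_{r_1}}\lesssim\|a\|_{L_r}$ where $\frac1{r_1}=\frac1r-\frac1d$: choosing $r\in(1,d)$ (nonempty since $d\geq2$) makes $r_1$ finite and $>q$, while $\|a\|_{L_r}\leq\|a\|_{\infty}^{1-1/r}\|a\|_{L_1}^{1/r}\lesssim1$. Adding the two contributions gives the atomic bound, and summing over the atomic decomposition finishes the proof of the mapping property.

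The step I expect to be the crux is that the target space is the \emph{Lorentz} class $L_{q,1}$, not merely $L_q$. The weak endpoint $I_1\colon L_1\to L_{q,\infty}$ --- equivalently the classical Gagliardo--Nirenberg--Sobolev inequality $\dot{W}_1^1\hookrightarrow L_q$ --- is not enough, and what upgrades $L_{q,\infty}$ to $L_{q,1}$ is precisely the cancellation carried by $\HS_1$ but not by $L_1$. That cancellation enters twice: once inside Theorem~\ref{DorronsoroTheorem}, which delivers an $\HS_1$ function rather than just an $L_1$ function, and once in the atomic estimate, where $\int a=0$ is exactly what produces the integrable decay $|I_1[a](x)|\lesssim|x|^{-d}$ at infinity. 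Everything else --- monotonicity and the triangle inequality in $L_{q,1}$, the finite-measure Lorentz inclusions, Hardy--Littlewood--Sobolev, and the density argument --- is soft.
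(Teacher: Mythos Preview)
Your proof is correct and follows exactly the route sketched in the paper: deduce the corollary from Theorem~\ref{DorronsoroTheorem} together with the mapping property $I_1\colon \HS_1\to L_{\frac{d}{d-1},1}$, the latter justified via atomic decomposition (the paper also mentions real interpolation \`a la \cite{FRS} as an alternative). The paper leaves the details implicit, whereas you spell out the $\pm f$ trick needed to pass from the one-sided bound $I_1[F]\geq f$ to control of $|f|$, and you carry out the atomic estimate; these are precisely the missing steps, and your execution of them is sound.
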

Here~$\dot{W}_1^1$ is the homogeneous Sobolev space, which is the completion of the set~$C_0^{\infty}$ with respect to the norm
\begin{equation*}
\|f\|_{\dot{W}_1^1} = \|\nabla f\|_{L_1}.
\end{equation*}
In what follows, it is convenient to work with complex-valued functions also; we assume that a function in~$\dot{W}_1^1$ is complex-valued. The symbol~$L_{\frac{d}{d-1},1}$ denotes the Lorentz space (see the book~\cite{Grafakos} for a detailed study of these spaces). Corollary~\ref{LorentzSobolevEmbedding} was proved in~\cite{Faris}, however, see the paper~\cite{Kolyada} for even more general (with respect to another interpolation parameter) result. Corollary~\ref{LorentzSobolevEmbedding} follows from Theorem~\ref{DorronsoroTheorem} if one recalls that the Riesz potential~$I_1$ maps~$\HS_1$ to~$L_{\frac{d}{d-1},1}$ (this may be justified by means of real interpolation, see~\cite{FRS}; otherwise, use the atomic decomposition).

We give a proof of Theorem~\ref{DorronsoroTheorem} in the next section. It differs from the original proof in~\cite{Dorronsoro} by two points: it is constructive (i.e. the function~$F$ may be computed in terms of~$f$), the original proof used various duality arguments several times; the presented proof may seem more transparent, because we use only some basic geometric facts (such as Gustin's boxing inequality or the coarea formula) without going into detailed study of fractional maximal functions. However, the machinery that works in our proof is the same as in the original.

In Section~\ref{sGeneral}, we show that in a more general setting, the statements in the style of Theorem~\ref{DorronsoroTheorem} are equivalent to a proper analog of Gustin's inequality.

Finally, we collect the statements we use without proof in the last section.

The author is grateful to A. I. Nazarov and the anonymous referee for exposition advice and corrections.

\section{Proof of Theorem~\ref{DorronsoroTheorem}}
We begin with an easy lemma that lies in the heart of all our constructions. By~$(-\Delta)^{\frac12}$ we denote the Fourier multiplier with the symbol~$|\xi|$.
\begin{Le}\label{TestFunctionLemma}
For any function~$\varphi \in C_0^{\infty}(\mathbb{R}^d)$\textup, the function~$(-\Delta)^{\frac12}\varphi$ is in~$\HS_1$.
\end{Le}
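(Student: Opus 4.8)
The plan is to use the Riesz\nobreakdash-transform description of the Hardy class: a function~$g \in L_1(\mathbb{R}^d)$ lies in~$\HS_1$ if and only if all its Riesz transforms~$R_jg$,~$j = 1,\dots,d$, also belong to~$L_1$, and in that case~$\|g\|_{\HS_1} \approx \|g\|_{L_1} + \sum_j\|R_jg\|_{L_1}$ (see~\cite{Stein}). Thus, setting~$g := (-\Delta)^{\frac12}\varphi$, it suffices to check two things: that~$g \in L_1(\mathbb{R}^d)$, and that~$R_jg \in L_1(\mathbb{R}^d)$ for every~$j$. The Fourier transform of~$g$ is~$\widehat{g}(\xi) = |\xi|\widehat{\varphi}(\xi)$, which is continuous and rapidly decaying, so~$g$ is itself a bounded continuous function; the whole question of~$L_1$\nobreakdash-membership therefore reduces to decay of~$g$ at infinity.

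The Riesz transforms are the easy half. Since~$R_j$ is the Fourier multiplier with symbol~$-i\xi_j/|\xi|$, we get~$\widehat{R_jg}(\xi) = -i\xi_j\widehat{\varphi}(\xi) = \widehat{\partial_j\varphi}(\xi)$, so~$R_jg = \partial_j\varphi$ as tempered distributions. As~$\partial_j\varphi \in C_0^{\infty}(\mathbb{R}^d) \subset L_1(\mathbb{R}^d)$, this part is done (and, if needed, the norm is automatically controlled by~$\|\nabla\varphi\|_{L_1}$).

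The integrability of~$g$ itself is the only genuinely analytic point, and the main obstacle. I would argue via the pointwise representation~$(-\Delta)^{\frac12}\varphi(x) = c_d\,\mathrm{p.v.}\!\int(\varphi(x) - \varphi(y))|x-y|^{-d-1}\,dy$: for~$x$ outside~$\supp\varphi$ the principal value disappears and~$|g(x)| \lesssim \dist(x,\supp\varphi)^{-d-1}\|\varphi\|_{L_1}$, which is integrable over~$\mathbb{R}^d$ precisely because~$d + 1 > d$; together with boundedness of~$g$ near~$\supp\varphi$ this gives~$g \in L_1$. (Equivalently, one may note that~$|\xi|$ is, up to a constant, the Fourier transform of a tempered distribution homogeneous of degree~$-d-1$ and smooth off the origin, so its convolution with the compactly supported~$\varphi$ inherits the~$|x|^{-d-1}$ decay away from~$\supp\varphi$; or write~$g = \sum_j R_j(\partial_j\varphi)$ and exploit~$\int\partial_j\varphi = 0$ to gain the extra decay of the Riesz kernel.) Having~$g \in L_1$ and~$R_jg \in L_1$ for all~$j$, the characterization above yields~$(-\Delta)^{\frac12}\varphi \in \HS_1$, completing the proof.
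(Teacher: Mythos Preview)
Your argument is correct. The identity $R_j\big((-\Delta)^{1/2}\varphi\big)=\pm\partial_j\varphi$ (the sign depending on conventions) reduces everything to the single estimate $|g(x)|\lesssim(1+|x|)^{-d-1}$, which you obtain just as the paper does, via the kernel representation of the half-Laplacian outside~$\supp\varphi$.

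Your route differs from the paper's in the choice of $\HS_1$-characterization. The paper uses the grand-maximal definition and therefore has to control $\sup_{t>0}|g*\psi_t|(x)$ pointwise; this costs an extra step, namely the uniform rescaled bound $\big|(-\Delta)^{1/2}[t^{-d}\psi(\cdot/t)]\big|(x)\lesssim(t+|x|)^{-d-1}$, which is then fed back into the convolution. Your Riesz-transform approach bypasses this entirely: once $R_jg=\partial_j\varphi\in C_0^\infty$ is observed, the only analysis left is the $L_1$ bound on $g$ itself, which both proofs need anyway. So your proof is shorter and arguably more transparent. The paper's approach, on the other hand, yields the explicit pointwise majorant $(1+|x|)^{-d-1}$ for the maximal function, which is slightly more information and is in the spirit of the quantitative bounds used downstream (e.g.\ Corollary~\ref{DilationCorollary}); but for the bare statement of the lemma your argument is the cleaner one.
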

\begin{proof}
We proceed in several steps. First, we show that~$(-\Delta)^{\frac12}\varphi \in L_{\infty}(\mathbb{R}^d)$. Indeed, the Fourier transform of this function belongs to~$L_1$, because it is bounded and decays rapidly at infinity.

Second, we show that~$(-\Delta)^{\frac12}\varphi(x) \lesssim (1+|x|)^{-d-1}$. Since~$(-\Delta)^{\frac12}\varphi \in L_{\infty}$, it suffices to verify the inequality only for~$x \notin \supp \varphi$. For such~$x$, we can integrate by parts:
\begin{equation*}
(-\Delta)^{\frac12}\varphi(x) = I_1[-\Delta\varphi](x) = -c_1\int\limits_{\mathbb{R}^d}\Delta\varphi(x-t)|t|^{1-d} = -c_1'\int\limits_{\mathbb{R}^d}\varphi(x-t)c|t|^{-1-d}, \quad x \notin \supp \varphi,
\end{equation*}
here~$c_1'$ denotes the numerical constant that arises from the differentiation of the potential. 

Third, we have
\begin{equation}\label{RescalingBound}
\Big|(-\Delta)^{\frac12}\Big[t^{-d}\varphi\Big(\frac{\cdot}{t}\Big)\Big]\Big|(x) \lesssim (t+|x|)^{-d-1},\quad t > 0,
\end{equation}
uniformly with respect to~$t$.

Now let~$\psi$ be an arbitrary~$C_0^{\infty}$ function. By the very definition,
\begin{equation*}
\|(-\Delta)^{\frac12}\varphi\|_{\HS_1} \asymp \Big\|\sup_{t>0}\Big|(-\Delta)^{\frac12}\varphi*t^{-d}\psi\Big(\frac{\cdot}{t}\Big)\Big|\Big\|_{L_1}.
\end{equation*}
Therefore, it suffices to bound the supremum on the right-hand side (as a function of~$x$) by~$(1+|x|)^{-d-1}$. Again, since~$(-\Delta)^{\frac12}\varphi \in L_{\infty}$,
\begin{equation*}
\sup_{t>0}\Big|(-\Delta)^{\frac12}\varphi*t^{-d}\psi\Big(\frac{\cdot}{t}\Big)\Big| \lesssim 1.
\end{equation*} 
So, it suffices to obtain the bound far from the support of~$\varphi$ (say, for the points~$x$ such that~$\dist(x,\supp\varphi) \geq 1$). For them, we can use inequality~\eqref{RescalingBound} (for the function~$\psi$ instead of~$\varphi$):
\begin{equation*}
\Big|(-\Delta)^{\frac12}\varphi*t^{-d}\psi\Big(\frac{\cdot}{t}\Big)\Big|(x) = \Big|\varphi*(-\Delta)^{\frac12}\Big[t^{-d}\psi\Big(\frac{\cdot}{t}\Big)\Big]\Big|(x) \lesssim \Big|\varphi*(t+|\cdot|)^{-d-1}\Big|(x) \leq\big|\varphi*|\cdot|^{-d-1}\big|(x) \lesssim|x|^{-d-1}.
\end{equation*} 
\end{proof}
We note that Lemma~\ref{TestFunctionLemma} is not new. For example, it is a particular case of a more advanced study in~\cite{Str}.

Now fix a hat-function~$\theta$ (i.e. a~$C_0^{\infty}(\mathbb{R}^d)$-function that is non-negative and equals one on the unit ball). Let~$R$ be a positive real number, then~$\theta_R(x) = \theta(\frac{x}{R})$. Using Lemma~\ref{TestFunctionLemma} for~$\varphi = \theta$ and rescaling, we get a corollary.
\begin{Cor}\label{DilationCorollary}
For any~$R > 0$ there exists a real-valued function~$\Theta_R \in \HS_1$ such that
\begin{equation*}
I_1[\Theta_R] \geq \chi_{B_R(0)};\quad \|\Theta_R\|_{\HS_1} \lesssim R^{d-1}
\end{equation*}
uniformly in~$R$.
\end{Cor}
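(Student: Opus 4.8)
The plan is to define $\Theta_R$ explicitly as $\Theta_R := (-\Delta)^{\frac12}\theta_R$ and to check the two required properties directly. Since $\theta_R \in C_0^{\infty}(\mathbb{R}^d)$, Lemma~\ref{TestFunctionLemma} already gives $\Theta_R \in \HS_1$; moreover $\theta_R$ is real-valued and the symbol $|\xi|$ is real and even, so $\Theta_R$ is real-valued as well. This is the whole construction — the rest is verification.

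For the pointwise inequality I would observe that $I_1 \circ (-\Delta)^{\frac12}$ is the Fourier multiplier with symbol $|\xi|^{-1}\cdot|\xi| \equiv 1$. Since $\widehat{\theta_R}$ is a Schwartz function, multiplying it first by $|\xi|$ and then by $|\xi|^{-1}$ produces no singularity at the origin (the function $|\xi|\widehat{\theta_R}(\xi)$ vanishes there), so $I_1[\Theta_R] = \theta_R$ in the classical sense. As $\theta$ equals one on the unit ball and is non-negative, $\theta_R$ equals one on $B_R(0)$ and is non-negative everywhere, whence $I_1[\Theta_R] = \theta_R \geq \chi_{B_R(0)}$.

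The only genuine computation is the norm bound, and it is pure scaling. Writing $h := (-\Delta)^{\frac12}\theta$ and tracking the Fourier transform through the dilation $\theta_R = \theta(\cdot/R)$, one gets $\Theta_R(x) = R^{-1}h(x/R)$. Since the $\HS_1$-norm (e.g. via the grand maximal function characterization) scales like the $L_1$-norm, $\|g(\cdot/R)\|_{\HS_1} = R^d\|g\|_{\HS_1}$, this yields $\|\Theta_R\|_{\HS_1} = R^{d-1}\|h\|_{\HS_1}$, and $\|h\|_{\HS_1}$ is a finite absolute constant by Lemma~\ref{TestFunctionLemma}. I do not expect a real obstacle: the statement is essentially Lemma~\ref{TestFunctionLemma} together with bookkeeping of dilations. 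If anything needs care, it is keeping the normalization conventions consistent so that the Hardy-space dilation identity carries the exponent $d$ (and hence the final estimate the exponent $d-1$), and recording that the symbolic identity $I_1[\Theta_R] = \theta_R$ is legitimate for this particular $\Theta_R$.
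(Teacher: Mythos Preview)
Your proposal is correct and follows exactly the route indicated in the paper: define $\Theta_R=(-\Delta)^{\frac12}\theta_R$, use Lemma~\ref{TestFunctionLemma} for $\varphi=\theta$, and rescale. The paper only says ``using Lemma~\ref{TestFunctionLemma} for~$\varphi=\theta$ and rescaling'' and then notes the explicit choice $\Theta_R=(-\Delta)^{\frac12}\theta_R$; you have simply written out the scaling computation $\Theta_R(x)=R^{-1}h(x/R)$ and the identity $I_1[\Theta_R]=\theta_R$ in full.
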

The symbol~$\chi_{\omega}$ denotes the characteristic function of a measurable set~$\omega$;~$B_{r}(z)$ stands for the ball of radius~$r$ centered at~$z$. Specifically, one may take~$\Theta_R = (-\Delta)^{\frac12}\theta_R$. Obviously, one can change the ball centered at the origin for any other ball of the same radius. So, we have proved Theorem~\ref{DorronsoroTheorem} ``for the case where~$f$ is a characteristic function of a ball''. The latter part of the proof is very standard (for example, a similar method leads to the characterization of measures~$\mu$ such that~$\dot{W}_1^1 \hookrightarrow L_{q}(\mu)$, see~\cite{MazjaShaposhnikova}), the idea is to break the function~$f$ into characteristic functions of balls with the control of the~$\dot{W}_1^1$-norm. For that purpose we need the notion of Hausdorff capacity.
\begin{Def}
Let~$\alpha \in [0,d]$. The~$\alpha$-Hausdorff capacity of the set~$\omega \subset \mathbb{R}^d$ is defined by the formula
\begin{equation}\label{CapacityFormula}
H^{\alpha}_{\infty}(\omega) = \inf_{\mathcal{B}} \sum r_j^{\alpha},
\end{equation}
where the infimum is taken over all the coverings~$\mathcal{B}$ of~$\Omega$ by closed balls \textup(and the~$r_j$ are the radii of the balls\textup).
\end{Def}
The Proposition below may be interpreted as ``the case where~$f$ is a characteristic function of a set'' in Theorem~\ref{DorronsoroTheorem}.
\begin{St}\label{DorronsoroForSets}
Let~$\omega$ be an open subset of~$\mathbb{R}^d$. There exists a real-valued function~$\Omega \in \HS_1$ such that
\begin{equation*}
I_1[\Omega] \geq \chi_{\omega};\quad \|\Omega\|_{\HS_1} \lesssim H^{d-1}_{\infty}(\omega).
\end{equation*}
\end{St}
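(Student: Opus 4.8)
The plan is to glue together the single-ball solutions furnished by Corollary~\ref{DilationCorollary}: Proposition~\ref{DorronsoroForSets} should be thought of as the "subadditive" form of the ball case, so the whole proof is just a covering argument plus the completeness of~$\HS_1$. We may assume~$H^{d-1}_{\infty}(\omega) < \infty$, since otherwise there is nothing to prove. By the definition~\eqref{CapacityFormula}, fix a countable covering~$\omega \subset \bigcup_j B_{r_j}(x_j)$ by balls with~$\sum_j r_j^{d-1} \leq 2H^{d-1}_{\infty}(\omega)$.

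For each~$j$, apply Corollary~\ref{DilationCorollary} with~$R = r_j$ and translate the resulting function so that it is centered at~$x_j$; call it~$\Theta^{(j)}$, so that~$I_1[\Theta^{(j)}] \geq \chi_{B_{r_j}(x_j)} \geq 0$ and, by translation invariance of the~$\HS_1$-norm,~$\|\Theta^{(j)}\|_{\HS_1} \lesssim r_j^{d-1}$. Set~$\Omega = \sum_j \Theta^{(j)}$. Since~$\sum_j \|\Theta^{(j)}\|_{\HS_1} \lesssim \sum_j r_j^{d-1} \lesssim H^{d-1}_{\infty}(\omega) < \infty$ and~$\HS_1$ is complete, this series converges in~$\HS_1$ to a real-valued function with~$\|\Omega\|_{\HS_1} \leq \sum_j \|\Theta^{(j)}\|_{\HS_1} \lesssim H^{d-1}_{\infty}(\omega)$; this is the required norm bound.

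It remains to check~$I_1[\Omega] \geq \chi_{\omega}$. Let~$S_N = \sum_{j \leq N}\Theta^{(j)}$; then~$S_N \to \Omega$ in~$\HS_1$, so~$I_1[S_N] \to I_1[\Omega]$ in~$L_{\frac{d}{d-1},1}$ (as~$I_1$ maps~$\HS_1$ boundedly into~$L_{\frac{d}{d-1},1}$), and a subsequence converges to~$I_1[\Omega]$ almost everywhere. On the other hand, by linearity~$I_1[S_N] = \sum_{j \leq N} I_1[\Theta^{(j)}]$ is a non-decreasing sequence of non-negative functions, hence converges everywhere to some~$g$; if~$x \in \omega$ then~$x$ lies in some~$B_{r_{j_0}}(x_{j_0})$, so~$g(x) \geq I_1[S_N](x) \geq \chi_{B_{r_{j_0}}(x_{j_0})}(x) = 1$ for~$N \geq j_0$, i.e.~$g \geq \chi_{\omega}$ everywhere. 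Since~$g = I_1[\Omega]$ a.e., we get~$I_1[\Omega] \geq \chi_{\omega}$ a.e., completing the proof. The argument is essentially routine given Corollary~\ref{DilationCorollary}; the only point requiring a little care is the interchange of~$I_1$ with the infinite sum, which is why I keep track of the partial sums~$S_N$ and combine continuity of~$I_1$ on~$\HS_1$ with the monotonicity coming from~$I_1[\Theta^{(j)}] \geq 0$.
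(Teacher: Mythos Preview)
Your argument is correct and follows essentially the same route as the paper: take an almost optimal covering of~$\omega$ by balls and set~$\Omega$ equal to the sum of the translated functions~$\Theta_{r_j}$ from Corollary~\ref{DilationCorollary}. You simply fill in more detail than the paper does about the convergence of the series in~$\HS_1$ and the passage of~$I_1$ through the sum.
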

To prove the proposition, one simply considers an almost optimal (in formula~\eqref{CapacityFormula}) covering of~$\omega$ by the balls~$B_{r_j}(x_j)$ and take~$\Omega$ to be~$\sum \Theta_{r_j,x_j}$, where~$\Theta_{r_j,x_j}$ denotes the function~$\Theta_{r_j}$ from Corollary~\ref{DilationCorollary} adjusted to the ball~$B_{r_j}(x_j)$.
\paragraph{Proof of Theorem~\ref{DorronsoroTheorem}.} By using dilations, we may assume that~$f$ is supported in a unit cube, and multiplying it by an appropriate scalar, we may assume that~$\|\nabla f\|_{L_1}=1$. For any~$j \in \mathbb{Z}_+$, define~$\omega_j = \{x \in \mathbb{R}^d\mid f(x) > j\}$. For each~$\omega_j$, we construct a real-valued~$\HS_1$-function~$\Omega_j$ such that
\begin{equation*}
I_1[\Omega_j] \geq \chi_{\omega_j};\quad \|\Omega_j\|_{\HS_1} \lesssim H^{d-1}_{\infty}(\omega_j).
\end{equation*}
Such functions~$\Omega_j$ exist by virtue of Proposition~\ref{DorronsoroForSets}.
Define~$F$ by the formula
\begin{equation*}
F = \sum\limits_{j \geq 0}\Omega_j.
\end{equation*}
Then,
\begin{equation*}
f \leq \sum\limits_{j \geq 0}\chi_{\omega_j} \leq \sum\limits_{j \geq 0} I_1[\Omega_j] = I_1 [F].
\end{equation*}
Moreover,
\begin{equation*}
\begin{aligned}
\|F\|_{\HS_1} \leq \sum\limits_{j=0}^{\infty}\|\Omega_j\|_{\HS_1} \lesssim \sum\limits_{j=0}^{\infty} H^{d-1}_{\infty}(\omega_j) \lesssim 1+ \int\limits_{0}^{\infty} H^{d-1}_{\infty}(\{x\in \mathbb{R}^d\mid f(x) > t\}) \lesssim \\
1+\int\limits_{\mathbb{R}} H^{d-1}(f^{-1}(t)) = 2\|\nabla f\|_{L_1}.
\end{aligned}
\end{equation*}
Here~$H^{d-1}$ denotes the Hausdorff~$(d-1)$-measure. The last but one inequality is an application of Gustin's inequality, Theorem~\ref{GustinTheorem} (note that, by Sard's theorem, almost all sets~$\{x\in \mathbb{R}^d\mid f(x) > t\}$ have smooth boundary), the last one is the coarea formula.\qed

\section{Embeddings for vector fields}\label{sGeneral}

We present a general statement that lies behind Theorem~\ref{DorronsoroTheorem}. In what follows, let~$E$ and~$F$ be two finite dimensional vector spaces over~$\mathbb{C}$. Consider a function~$A: \mathbb{R}^d\times E \mapsto F$ that is a homogeneous polynomial of order~$m$ with respect to the first variable and a linear transformation with respect to the second one. In such a case,~$A$ generates the differential operator that maps~$E$-valued vector fields on~$\mathbb{R}^d$ to~$F$-valued vector fields by the rule
\begin{equation*}
A(\partial)f = \mathcal{F}^{-1}\Big[A\big(i \xi,\mathcal{F}[f](\xi)\big)\Big],\quad f:\mathbb{R}^d\to E,
\end{equation*}
the symbol~$\mathcal{F}$ denotes the Fourier transform. Surely, the field~$f$ must be sufficiently smooth (e.g. belong to the Schwartz class). For example, the differential operator~$\nabla$ corresponds to the function~$A_{\nabla}$ given by the formula
\begin{equation*}
\mathbb{R}^d \ni A_{\nabla}(\xi, e) = \xi e, \quad e\in \mathbb{R},\xi \in \mathbb{R}^d.  
\end{equation*}
\begin{Th}[Van Schaftingen's theorem,~\cite{vanSchaftingen2}]\label{canSchaftingenTheorem}
The inequality
\begin{equation*}
\|\nabla^{m-1} f\|_{L_{\frac{d}{d-1}}} \lesssim \|A(\partial)f\|_{L_1}
\end{equation*}
holds if and only if the polynomial~$A$ is elliptic \textup(i.e.~$A(\xi,e) = 0$ if and only if~$e=0$ or~$\xi = 0$\textup) and cancelling\textup, i.e.
\begin{equation*}
\cap_{\xi\in\mathbb{R}^d \setminus \{0\}}A(\xi,E) = \{0\}.
\end{equation*}
\end{Th}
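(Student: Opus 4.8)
The plan is to reduce Van Schaftingen's theorem to Dorronsoro-type statements proved earlier, treating the necessity and sufficiency directions separately. For necessity, I would test the inequality against carefully chosen vector fields. Ellipticity is the easy half: if $A(\xi_0,e_0)=0$ for some $\xi_0\neq 0$, $e_0\neq 0$, then plane-wave-like fields $f(x)=e_0\,h(\scalprod{\xi_0}{x})$ (suitably cut off and mollified) make $A(\partial)f$ small in $L_1$ while $\nabla^{m-1}f$ stays large, contradicting the estimate. The cancellation condition is the more delicate necessity argument: if some nonzero $v$ lies in $\cap_{\xi\neq 0}A(\xi,E)$, pick for each $\xi$ a preimage $e(\xi)$ with $A(\xi,e(\xi))=v$; then one can build a field whose image under $A(\partial)$ concentrates (in the $L_1$-sense) near a delta mass in the direction $v$, while $\nabla^{m-1}f$ fails to be in $L_{d/(d-1)}$ — this is exactly where the failure of $L_1\to\dot W^{1}_1$-type control at the endpoint bites, and I would model it on the classical counterexample showing $\Delta\colon L_1\not\to \dot W^2_1$.

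For sufficiency — the substantive direction — I would follow the dual strategy used for Theorem~\ref{DorronsoroTheorem}. By duality, $\|\nabla^{m-1}f\|_{L_{d/(d-1)}}\lesssim\|A(\partial)f\|_{L_1}$ is equivalent to the statement that for every $L_{d}$-vector field $g$ (in the appropriate target space of $\nabla^{m-1}$) one can find $G\in\HS_1$ with $\scalprod{\nabla^{m-1}f}{g}=\scalprod{A(\partial)f}{G}$ for all test $f$, and $\|G\|_{\HS_1}\lesssim\|g\|_{L_d}$; unwinding, this asks to invert the symbol. Concretely, ellipticity gives that on $\{|\xi|=1\}$ the linear map $A(\xi,\cdot)\colon E\to F$ is injective, so it has a smooth left inverse $L(\xi)$ on its range, and cancellation is precisely what is needed to patch these local right inverses into a global bounded (zeroth-order, but with a logarithmic or genuine loss unless cancellation holds) operator — more precisely, cancellation ensures that the map $F\ni w\mapsto (A(\xi,\cdot))^{-1}w$ defined on the range extends to a family depending on $\xi$ with the right homogeneity so that the composition with $\nabla^{m-1}$ is a Calderón–Zygmund-type operator that, crucially, lands in $\HS_1$ rather than merely $L_1$ when applied to the building blocks $\Theta_R$ from Corollary~\ref{DilationCorollary}. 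The key step is an analog of Lemma~\ref{TestFunctionLemma}: for a test function $\varphi$, the field $B(\partial)\varphi$ — where $B$ is the symbol realizing $\nabla^{m-1}=B(\partial)\circ A(\partial)$ modulo the cancellation correction — lies in $\HS_1$, with decay estimates obtained exactly as there by integration by parts far from $\supp\varphi$.

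I would then run the same layer-cake / boxing-inequality machinery: decompose $f$ (or rather the relevant scalar components controlling $\nabla^{m-1}f$) via superlevel sets, cover each by balls, use the rescaled $\HS_1$-atoms to reconstruct, and sum using the coarea formula and Gustin's inequality (Theorem~\ref{GustinTheorem}). The point of Section~\ref{sGeneral} as advertised is that the whole argument depends on $A$ only through (i) the algebraic conditions ellipticity + cancellation, which are what make the symbol-inversion and the $\HS_1$-membership of $B(\partial)\varphi$ work, and (ii) the geometric input, Gustin's inequality, which is independent of $A$ altogether.

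**The main obstacle** I expect is the algebraic/harmonic-analytic heart of sufficiency: showing that cancellation is exactly the condition under which the pointwise right inverses $A(\xi,\cdot)^{-1}$ on the ranges assemble into a symbol whose associated operator, post-composed with $\nabla^{m-1}$, maps into $\HS_1$ with no endpoint loss. Without cancellation one only gets a bounded function times a Riesz transform, i.e. $L_{d/(d-1),\infty}$ or a BMO-type defect; isolating the precise linear-algebra condition on $\cap_\xi A(\xi,E)$ that removes this defect, and proving the decay estimates for the corrected test-function image, is the crux. The rest — duality bookkeeping, the layer-cake sum, invoking Gustin and coarea — is routine once that piece is in place.
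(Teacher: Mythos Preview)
The paper does not prove this theorem; it is quoted from~\cite{vanSchaftingen2} as background, with no argument supplied. So there is no ``paper's own proof'' to compare against.

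More importantly, your proposed sufficiency argument cannot work as written, and the paper itself tells you why. You propose to run the layer-cake / Gustin / coarea machinery from the proof of Theorem~\ref{DorronsoroTheorem}, producing for each component an~$\HS_1$ function whose Riesz potential dominates it. But that route, when it succeeds, yields the stronger Lorentz embedding into~$L_{\frac{d}{d-1},1}$ (exactly as Corollary~\ref{LorentzSobolevEmbedding} follows from Theorem~\ref{DorronsoroTheorem}). The paper explicitly flags, right after the statement of Theorem~\ref{canSchaftingenTheorem}, that the Lorentz analogue for general elliptic cancelling~$A$ is an \emph{open problem} (see the remark citing~\cite{vS2}). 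So if your plan went through, you would have resolved that open problem; conversely, its being open is strong evidence that the plan does not go through.

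The concrete obstruction is the step ``decompose~$f$ (or rather the relevant scalar components controlling~$\nabla^{m-1}f$) via superlevel sets \ldots\ and sum using the coarea formula and Gustin's inequality''. The coarea formula identifies~$\|\nabla f\|_{L_1}$ with~$\int H^{d-1}(\{f=t\})\,dt$ for a \emph{scalar}~$f$ and the \emph{specific} operator~$\nabla$. For a general elliptic cancelling operator~$A(\partial)$ acting on vector fields there is no such formula linking~$\|A(\partial)f\|_{L_1}$ to Hausdorff measures of level sets of anything, so the boxing-inequality step has no right-hand side to plug into. This is exactly why Theorem~\ref{DorronsoroGeneralization} in the paper is only an \emph{equivalence} between two statements, not a proof that either holds for general~$A$. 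Van Schaftingen's original proof uses a different mechanism (a slicing / averaging argument on hyperplanes combined with the cancellation condition) that gives only the Lebesgue endpoint~$L_{\frac{d}{d-1}}$, not the Lorentz refinement.
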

Surprisingly, there is no result that is similar to Corollary~\ref{LorentzSobolevEmbedding} (this is an open problem whether a similar theorem can be stated with the Lebesgue norm~$L_{\frac{d}{d-1}}$ replaced by the Lorentz norm~$L_{\frac{d}{d-1},1}$; see the recent survey~\cite{vS2}) in such a general setting. However, we can say something. We need one more definition.
\begin{Def}\label{FractionalMaximalFunction}
Let~$a \in [0,d)$. If~$f$ is a locally summable function on~$\mathbb{R}^d$ \textup(or a measure of locally bounded variation\textup)\textup, then the fractional maximal operator of order~$a$ acts on it by the formula
\begin{equation*}
M_a[f](x) = \sup_{r > 0}\;r^{a-d}\!\!\!\int\limits_{|x-y| \leq r} |f|(y)\,dy.
\end{equation*}
\end{Def}
For a measure, the integral over a ball is replaced by the total variation over the same ball. In particular,~$M_0$ is the usual Hardy--Littlewood maximal operator.
\begin{Th}\label{DorronsoroGeneralization}
Let~$A$ be as above\textup, let~$l$ be any non-zero element of~$E^*$\textup, let~$j=1,2,\ldots,d$. The two statements below are equivalent.
\textup{\begin{enumerate}
\item \emph{For any smooth compactly supported vector field~$\varphi$ there exists a real-valued function~$\Phi$ such that
\begin{equation*}
I_1[\Phi] \geq \Re\scalprod{\partial_j^{m-1}\varphi}{l};\quad \|\Phi\|_{\HS_1} \lesssim \|A(\partial)\varphi\|_{L_1}.
\end{equation*}
}
\item \emph{For any smooth compactly supported vector field~$\varphi$ and every non-negative Borel measure~$\mu$
\begin{equation*}
\Re\int\limits_{\mathbb{R}^d}\scalprod{\partial_j^{m-1}\varphi}{l}\,d\mu \lesssim\|A(\partial)\varphi\|_{L_1}\|M_1[\mu]\|_{L_{\infty}}.
\end{equation*}}
\end{enumerate}}
\end{Th}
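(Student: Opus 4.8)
The plan is to establish the equivalence by showing each implication via a duality argument, exploiting the fact that the Hardy space $\HS_1$ has $\BMO$ as its dual, and that the Riesz potential $I_1$ is self-adjoint. For the implication $(1)\Rightarrow(2)$: given a smooth compactly supported vector field $\varphi$ and a non-negative Borel measure $\mu$, produce $\Phi\in\HS_1$ as in statement $(1)$. Then write
\begin{equation*}
\Re\int_{\mathbb{R}^d}\scalprod{\partial_j^{m-1}\varphi}{l}\,d\mu \leq \int_{\mathbb{R}^d} I_1[\Phi]\,d\mu = \int_{\mathbb{R}^d}\Phi\cdot I_1[\mu]\,dx \leq \|\Phi\|_{\HS_1}\|I_1[\mu]\|_{\BMO},
\end{equation*}
and the proof is completed by the standard fact that $\|I_1[\mu]\|_{\BMO}\lesssim\|M_1[\mu]\|_{L_\infty}$ (this is essentially the observation that the mean oscillation of $I_1[\mu]$ over a ball $B_r$ is controlled by $r^{1-d}\mu(cB_r)$, which I would verify by splitting $\mu$ into its restriction to $2B_r$ and the rest, estimating the local part by the potential-theoretic size bound and the far part by the smoothness of the kernel). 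One subtlety: $I_1[\Phi]$ need not be literally the pointwise Riesz potential, so the pairing $\int\Phi\cdot I_1[\mu]$ should be interpreted via the $\HS_1$–$\BMO$ duality with $I_1$ moved onto the $\BMO$ side; since $\Phi$ is only in $\HS_1$ one must be mildly careful with the integration-by-parts, approximating $\mu$ by smooth densities if needed.

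For the reverse implication $(2)\Rightarrow(1)$: fix $\varphi$ and set $g=\Re\scalprod{\partial_j^{m-1}\varphi}{l}$, a real-valued $C_0^\infty$ function. I want to find $\Phi\in\HS_1$ with $I_1[\Phi]\geq g$ and $\|\Phi\|_{\HS_1}\lesssim\|A(\partial)\varphi\|_{L_1}$. The natural approach is a Hahn--Banach / minimax argument: consider the set $K=\{\Phi\in\HS_1 : I_1[\Phi]\geq g\}$, which is convex, and one wants to show it meets the ball of radius $C\|A(\partial)\varphi\|_{L_1}$ in $\HS_1$. Equivalently, by a separation argument in the duality between $\HS_1$ and $\BMO$ (or rather between measures and continuous functions, which is cleaner here), the obstruction to finding such a $\Phi$ is exactly a non-negative measure $\mu$ — arising as the "Lagrange multiplier" for the constraint $I_1[\Phi]\geq g$ — for which $\int g\,d\mu > C\|A(\partial)\varphi\|_{L_1}\|M_1[\mu]\|_{L_\infty}$, contradicting $(2)$. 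Concretely, I would run the argument as follows: suppose no such $\Phi$ exists; then $g$ lies outside the closure of $\{I_1[\Phi] : \|\Phi\|_{\HS_1}\leq C\|A(\partial)\varphi\|_{L_1}\} + \{\text{non-positive functions}\}$ in an appropriate topology; separate by a functional, which by Riesz representation is (a limit of) a non-negative measure $\mu$; normalize so that $\|M_1[\mu]\|_{L_\infty}$ is finite (this requires knowing the separating measure can be taken with bounded fractional maximal function, which follows because $I_1$ maps into a space whose dual is controlled by $M_1$); then the separation inequality reads $\int g\,d\mu > C\|A(\partial)\varphi\|_{L_1}\|M_1[\mu]\|_{L_\infty}$, contradicting $(2)$ once $C$ is chosen large enough.

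The main obstacle I anticipate is making the separation argument in $(2)\Rightarrow(1)$ rigorous: one must choose the right locally convex topology so that the relevant convex set is closed and the separating functional is represented by an honest non-negative measure with \emph{finite} $M_1$-norm, rather than merely a finitely additive set function or a functional on $\BMO$ that need not be a measure. The clean way around this is to first prove the statement with $\HS_1$ replaced by the space of real measures (or to use the characterization of $\HS_1$ via boundedness against $\BMO$ together with a weak-* compactness argument on balls of measures), exploiting that $\Re\scalprod{\partial_j^{m-1}\varphi}{l}$ is continuous and compactly supported, so that pairings against it are weak-* continuous; the maximal-function normalization of $\mu$ then comes for free from the structure of $I_1$ and the definition of $M_1$. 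The remaining pieces — the $\BMO$ bound $\|I_1[\mu]\|_{\BMO}\lesssim\|M_1[\mu]\|_{L_\infty}$ and the self-adjointness of $I_1$ — are standard and I would invoke them with a one-line justification.
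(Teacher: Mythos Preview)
Your argument for $(1)\Rightarrow(2)$ is correct and is essentially what the paper does (though the paper packages both directions into one minimax identity rather than treating them separately). The bound $\|I_1[\mu]\|_{\BMO}\lesssim\|M_1[\mu]\|_{L_\infty}$ that you need here is exactly the first half of Adams's theorem (Theorem~\ref{AdamsTheorem}).

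For $(2)\Rightarrow(1)$ you have correctly located the difficulty, but you have not resolved it, and the paper's resolution is specific enough that your outline does not yet reach it. The obstacle you name---that a separating functional in the dual of $\HS_1$ is a $\BMO$ function, not a priori a non-negative measure with finite $M_1$---is exactly what the paper handles, and it does so by three ingredients you are missing. First, rather than a Hahn--Banach separation, the paper applies Ky~Fan's minimax theorem (Theorem~\ref{KyFanTheorem}) with $X$ the unit ball of $\BMO$, which is weak-$*$ compact; this gives directly
\[
\min\{\|g\|_{\HS_1}: I_1[g]\geq \Re\scalprod{\partial_j^{m-1}\varphi}{l}\}=\max_{f\in X}\min_{g\in Y}\Re\scalprod{f}{g}.
\]
Second, the inner pairing is rewritten as $\scalprod{I_1[g]}{\Re(-\Delta)^{1/2}f}$, and Lemma~\ref{TestFunctionLemma} (that $(-\Delta)^{1/2}$ of a $C_0^\infty$ function lies in $\HS_1$) is used to show that whenever $\Re(-\Delta)^{1/2}f$ fails to be a non-negative distribution the inner minimum is $-\infty$; this is precisely what forces the effective dual variable to be a non-negative measure $\mu_f$, and it is the step your outline lacks. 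Third, the constraint on $\mu_f$ that emerges is $\|I_1[\mu_f]\|_{\BMO}\leq 1$, and one needs the \emph{reverse} inequality in Adams's theorem, $\|M_1[\mu]\|_{L_\infty}\lesssim\|I_1[\mu]\|_{\BMO}$, to convert this into the $M_1$ bound appearing in statement~(2). Your proposal invokes only the forward Adams inequality; without the converse, you cannot pass from the $\BMO$ normalization produced by duality to the $M_1$ normalization in~(2).

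In short: your plan for $(2)\Rightarrow(1)$ is the right shape, but the passage from ``separating $\BMO$ functional'' to ``non-negative measure with $\|M_1[\mu]\|_{L_\infty}\leq 1$'' is the entire content of this direction, and it requires Lemma~\ref{TestFunctionLemma} and both halves of Theorem~\ref{AdamsTheorem}, neither of which your outline supplies.
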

\begin{proof}
We are going to apply Ky Fan's minimax theorem, Theorem~\ref{KyFanTheorem}. Let~$X$ be the unit ball of the~$\BMO$ space, this set is convex and compact (in the topology~$\sigma(\BMO,\HS_1)$, we use the fact that~$\BMO$ is dual to~$\HS_1$). Let~$Y$ be given by the formula
\begin{equation*}
Y = \{g \in \HS_1(\mathbb{R}^d)\mid\,g\;\hbox{is real-valued,}\; I_1[g] \geq \Re\scalprod{\partial_j^{m-1}\varphi}{l}\}.
\end{equation*}
The function~$L: X\times Y \to \mathbb{R}$ is defined as follows:
\begin{equation*}
L(f,g) = \Re\scalprod{f}{g}.
\end{equation*} 
This function is continuous and bilinear. So, by Theorem~\ref{KyFanTheorem} (we have interchanged the minimum and maximum, we can do this by applying the theorem to the function~$-L$, because we are working with a bilinear function~$L$),
\begin{equation*}
\max_{f\in X}\min_{g \in Y} \Re\scalprod{f}{g} = \min_{g \in Y}\max_{f\in X} \Re\scalprod{f}{g}.
\end{equation*}
The value on the right-hand side is (by the~$\HS_1$-$\BMO$ duality) 
\begin{equation*}
\min \{\|g\|_{\HS_1}\mid\,g\;\hbox{is real-valued,}\; I_1[g] \geq \Re\scalprod{\partial_j^{m-1}\varphi}{l}\}.
\end{equation*}
So, the first of the two statements listed in Theorem~\ref{DorronsoroGeneralization} is equivalent to the inequality
\begin{equation*}
\max_{f\in X}\min_{g \in Y} \Re\scalprod{f}{g} \lesssim \|A(\partial)\varphi\|_{L_1}.
\end{equation*}
Let us calculate the value on the left-hand side (we fix some function~$f$ for a while):
\begin{equation*}
\Re\scalprod{f}{g} = \scalprod{I_1[g]}{\Re(-\Delta)^{\frac12}[f]}.
\end{equation*}
This formula is meaningful, for example, when~$I_1[g] \in C_{0}^{\infty}$. If~$\Re(-\Delta)^{\frac12}[f]$ is not a non-negative distribution, then~$\min_{g \in Y}\Re\scalprod{f}{g}$ equals~$-\infty$. Indeed, this follows from Lemma~\ref{TestFunctionLemma}: if~$\scalprod{\phi}{\Re(-\Delta)^{\frac12}[f]} < 0$ for some non-negative~$C_0^{\infty}$-function~$\phi$, then the value
\begin{equation*}
\Big\langle\Re\scalprod{\partial_j^{m-1}\varphi}{l} + \lambda \phi, \Re(-\Delta)^{\frac12}[f]\Big\rangle
\end{equation*}
can be as small as we want when~$\lambda$ is big (and, by Lemma~\ref{TestFunctionLemma},~$\Re\scalprod{\partial_j^{m-1}\varphi}{l} + \lambda \phi = I_1[g_{\lambda}]$ for some~$g_{\lambda} \in Y$). By the Schwartz theorem, non-negative distributions are (real-valued non-negative) measures of locally bounded variation. But if~$\Re(-\Delta)^{\frac12}[f] = \mu_f$ is a measure, then 
\begin{equation*}
\min_{g \in Y} \Re\scalprod{f}{g} = \min_{g \in Y}\scalprod{I_1[g]}{\Re(-\Delta)^{\frac12}[f]} = \int\Re\scalprod{\partial_j^{m-1}\varphi}{l}\,d\mu_f,
\end{equation*}
where~$\mu_f$ is a non-negative measure of locally bounded variation such that~$\|I_1[\mu_f]\|_{\BMO} \leq 1$; this formula is obvious for the case~$I_1[g] \in C_0^{\infty}$, in the other cases it may be obtained by approximation. Thus, by Adams's theorem (Theorem~\ref{AdamsTheorem}),
\begin{equation*}
\max_{f\in X}\min_{g \in Y} \Re\scalprod{f}{g} \asymp \max\Big(\big\{\Re\scalprod{\int\partial_j^{m-1}\varphi\,d\mu}{l}\,\big|\; \mu \;\hbox{is a non-negative measure such that}\; \|M_1[\mu]\|_{L_{\infty}} \leq 1\big\}\Big).
\end{equation*}
So, the second statement of Theorem~\ref{DorronsoroGeneralization} is equivalent to the inequality
\begin{equation*}
\max_{f\in X}\min_{g \in Y} \Re\scalprod{f}{g} \lesssim \|A(\partial)\varphi\|_{L_1}.
\end{equation*}
\end{proof}
Theorem~\ref{DorronsoroGeneralization} shows that statements in the spirit of Dorronsoro's theorem are, in a sense, equivalent to the fact that the class of measures~$\mu$ such that
\begin{equation*}
\|\nabla^{m-1}f\|_{L_{1}(\mu)} \lesssim \|A(\partial)[f]\|_{L_1}
\end{equation*}
does not depend on the operator~$A$.

\section{Our tools}
\begin{Th}[Gustin's boxing inequality,~\cite{Gustin}]\label{GustinTheorem}
Let~$\omega$ be an open bounded subset of~$\mathbb{R}^d$ with smooth boundary. Then\textup,
\begin{equation*}
H^{d-1}_{\infty}(\omega) \lesssim H^{d-1}(\partial\omega).
\end{equation*}
\end{Th}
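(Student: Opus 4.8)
The plan is to estimate $H^{d-1}_\infty(\omega)$ by combining the relative isoperimetric inequality in balls with the Vitali covering lemma. Recall that the relative isoperimetric inequality supplies a dimensional constant for which
\begin{equation*}
\bigl(\min\{|E\cap B|,\,|B\setminus E|\}\bigr)^{\frac{d-1}{d}} \lesssim H^{d-1}(\partial E\cap B)
\end{equation*}
for every ball $B$ and every set $E$ of finite perimeter (for $E$ with smooth boundary the right-hand side is exactly the $(d-1)$-dimensional Hausdorff measure of the portion of $\partial E$ lying inside $B$; in general one replaces it by the relative perimeter). I would apply this with $E=\omega$ on a ball chosen around each point of $\omega$ in which $\omega$ occupies exactly half the volume.

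Concretely, for each $x\in\omega$ I would pick a radius $r(x)>0$ with $|B_{r(x)}(x)\cap\omega|=\tfrac12|B_{r(x)}(x)|$. Such a radius exists: the ratio $|B_r(x)\cap\omega|/|B_r(x)|$ equals $1$ for small $r$ (since $\omega$ is open), tends to $0$ as $r\to\infty$ (since $\omega$ is bounded), and is continuous in $r$, so the intermediate value theorem applies. Boundedness of $\omega$ also gives the uniform bound $r(x)\lesssim|\omega|^{1/d}$, which is needed in the covering step. For the ball $B=B_{r(x)}(x)$ both quantities under the minimum above equal $\tfrac12|B|\asymp r(x)^d$, so the relative isoperimetric inequality yields
\begin{equation*}
r(x)^{d-1}\lesssim H^{d-1}\bigl(\partial\omega\cap B_{r(x)}(x)\bigr).
\end{equation*}

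Now the family $\{B_{r(x)}(x)\}_{x\in\omega}$ covers $\omega$ and has uniformly bounded radii, so the Vitali $5r$-covering lemma produces an at most countable pairwise disjoint subfamily $\{B_{r(x_k)}(x_k)\}_k$ whose fivefold dilations $\{B_{5r(x_k)}(x_k)\}_k$ still cover $\omega$. Taking the latter (closed) balls as an admissible covering in~\eqref{CapacityFormula},
\begin{equation*}
H^{d-1}_\infty(\omega)\leq\sum_k\bigl(5r(x_k)\bigr)^{d-1}\lesssim\sum_k H^{d-1}\bigl(\partial\omega\cap B_{r(x_k)}(x_k)\bigr)\leq H^{d-1}(\partial\omega),
\end{equation*}
where the last inequality uses the disjointness of the selected balls. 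This is exactly the asserted inequality.

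The only ingredient that requires real work rather than bookkeeping is the relative isoperimetric inequality itself; the intermediate value argument, the covering lemma, and the final summation are routine. I would simply invoke the relative isoperimetric inequality as classical — it follows, for instance, from the Euclidean isoperimetric inequality via a reflection/extension argument, or from the Sobolev–Poincaré inequality on a ball. An alternative, essentially equivalent route follows Gustin's original more combinatorial argument, but the path sketched above seems the most economical.
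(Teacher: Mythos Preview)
The paper does not actually prove this theorem: it is placed in the final ``Our tools'' section, which the introduction explicitly describes as collecting ``the statements we use without proof,'' and the result is simply attributed to~\cite{Gustin}. So there is no argument in the paper to compare against.

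Your proof is correct. The intermediate-value selection of the radius, the relative isoperimetric inequality on the half-occupied ball, and the Vitali $5r$-extraction combine exactly as you say; the uniform bound $r(x)\lesssim|\omega|^{1/d}$ is indeed what makes the covering lemma applicable, and disjointness of the selected balls legitimizes the final summation. This is the standard modern route to the boxing inequality (essentially the argument one finds in, e.g., Maz'ya's or Evans--Gariepy's expositions). Gustin's original proof is, as you note, more combinatorial --- it works with dyadic cubes and a direct counting/induction scheme rather than invoking the relative isoperimetric inequality --- so it is more elementary in the sense of being self-contained, whereas your approach trades that for brevity by importing a nontrivial (but classical) geometric inequality.
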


\begin{Th}[Ky Fan's minimax theorem]\label{KyFanTheorem}
Let~$X$ and~$Y$ be convex subsets of linear topological spaces\textup, let~$X$ be compact. If a continuous function~$L:X\times Y \to \mathbb{R}$ is convex with respect to the first variable and concave with respect to the second one\textup, then
\begin{equation*}
\min_{x\in X}\max_{y \in Y} L(x,y) = \max_{y \in Y}\min_{x\in X} L(x,y).
\end{equation*}
\end{Th}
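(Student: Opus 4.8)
Ky Fan's theorem is classical and is used here without proof; I only outline the route I would take to establish it. The inequality $\max_{y\in Y}\min_{x\in X}L(x,y)\leq\min_{x\in X}\max_{y\in Y}L(x,y)$ is the trivial one: for all $x_0,y_0$ one has $\min_{x}L(x,y_0)\leq L(x_0,y_0)\leq\max_{y}L(x_0,y)$, and taking the supremum over $y_0$ and then the infimum over $x_0$ yields it. (I write $\sup/\inf$ throughout; attainment follows from compactness of $X$ together with semicontinuity of the envelopes involved.) The content is thus the reverse inequality, which I would prove by contradiction: suppose there is a constant $c$ with
\[
\max_{y\in Y}\min_{x\in X}L(x,y)<c<\min_{x\in X}\max_{y\in Y}L(x,y).
\]

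The first step is to reduce to the case where $Y$ is a finite-dimensional \emph{compact} convex set. Since $c<\min_{x}\max_{y}L$, for every $x\in X$ there is $y\in Y$ with $L(x,y)>c$; by continuity the sets $V_y=\{x\in X\mid L(x,y)>c\}$ form an open cover of the compact set $X$, so finitely many of them, say $V_{y_1},\dots,V_{y_n}$, already cover it. Put $Y_0=\mathrm{conv}\{y_1,\dots,y_n\}\subseteq Y$. Every $x$ lies in some $V_{y_i}$, so $\max_{y\in Y_0}L(x,y)\geq\max_i L(x,y_i)>c$ and hence $\min_{x}\max_{y\in Y_0}L(x,y)\geq c$; on the other hand $\max_{y\in Y_0}\min_{x}L(x,y)\leq\max_{y\in Y}\min_{x}L(x,y)<c$. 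So it suffices to reach a contradiction for the pair $(X,Y_0)$, now with $Y_0$ compact.

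For $(X,Y_0)$ I would symmetrise. Let $Z=X\times Y_0$, a compact convex subset of the product space, and define $M\colon Z\times Z\to\mathbb{R}$ by $M\big((x,y),(x',y')\big)=L(x,y')-L(x',y)$. By convexity of $L$ in its first argument and concavity in its second, $M(\cdot,z')$ is convex and $M(z,\cdot)$ is concave for each frozen argument; moreover $M$ is continuous and $M(z,z)=0$. I would then apply the Knaster--Kuratowski--Mazurkiewicz principle in topological vector spaces (Ky Fan's geometric lemma, whose proof rests on Sperner's lemma, equivalently on Brouwer's fixed point theorem) to the closed sets $F_{z'}=\{z\in Z\mid M(z,z')\leq0\}$, $z'\in Z$. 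Each $F_{z'}$ is compact, being closed in $Z$, and the KKM covering condition holds: if a point $z=\sum_i\mu_i z'_i$ of $\mathrm{conv}\{z'_1,\dots,z'_k\}$ had $M(z,z'_i)>0$ for all $i$, then concavity of $M(z,\cdot)$ would force $M(z,z)>0$, which is false. Hence $\bigcap_{z'\in Z}F_{z'}\ne\emptyset$, and a point $z_0=(x_0,y_0)$ of this intersection obeys $L(x_0,y')\leq L(x',y_0)$ for all $x'\in X$ and $y'\in Y_0$. Taking $x'=x_0$ and then $y'=y_0$ gives $\max_{y\in Y_0}L(x_0,y)=L(x_0,y_0)=\min_{x\in X}L(x,y_0)$, whence $\min_{x}\max_{y\in Y_0}L\leq L(x_0,y_0)\leq\max_{y\in Y_0}\min_{x}L$, contradicting the strict gap displayed above. (A more elementary variant would also shrink $X$ to the convex hull of a finite net --- legitimate since $x\mapsto\max_{y\in Y_0}L(x,y)$ is convex and so has convex sublevel sets --- reducing matters to a finite-dimensional minimax statement, proved by applying Brouwer's theorem to the self-map of $X\times Y_0$ carrying $(x,y)$ to the minimiser of $L(\cdot,y)$ paired with the maximiser of $L(x,\cdot)$, after a strictly convex/concave perturbation rendering these unique and continuous.)

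I expect the only genuine obstacle to be the single topological ingredient --- a statement at the level of Brouwer's fixed point theorem, equivalently Sperner's lemma or the KKM lemma --- which cannot be dispensed with; the rest is bookkeeping: the reduction to a finite-dimensional $Y$, so that a finite-dimensional lemma becomes applicable, and the ``two-space'' nuisance that $X$ and $Y$ may lie in different ambient spaces, which is sidestepped by passing to $Z=X\times Y_0$ with the antisymmetric function $M$. One small caveat about the statement as written: $\max_{y\in Y}$ need not be attained unless $Y$ is compact, so strictly it should be read with $\sup$; this does not affect any use of the theorem in this paper.
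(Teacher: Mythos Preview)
The paper does not prove this theorem: it is listed in the final ``Our tools'' section among statements used without proof, with a reference to Ky Fan's original paper~\cite{KyFan} and the remark that only a simplification is stated. So there is nothing to compare against; your sketch supplies an argument where the paper deliberately omits one.

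As for the sketch itself, it is a correct and standard route. The trivial inequality is fine. The reduction to a compact finite-dimensional $Y_0$ via a finite subcover of $X$ by the sets $\{x:L(x,y)>c\}$ is the classical manoeuvre, and your verification that the strict gap persists for $(X,Y_0)$ is sound (indeed $\min_x\max_{y\in Y_0}L>c$ strictly, since the continuous function $x\mapsto\max_i L(x,y_i)$ exceeds $c$ pointwise on the compact $X$). The KKM step is also correct: with $M\big((x,y),(x',y')\big)=L(x,y')-L(x',y)$ one has $M(\cdot,z')$ convex, $M(z,\cdot)$ concave, $M(z,z)=0$, the KKM covering condition follows exactly as you wrote, and a point in $\bigcap_{z'}F_{z'}$ is a saddle point. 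Your closing caveat about $\sup$ versus $\max$ over the noncompact $Y$ is apt and worth keeping; in the paper's one application (the proof of Theorem~\ref{DorronsoroGeneralization}) the set $X$ is the compact one and $Y$ is not, so the ``$\max_{y\in Y}$'' there should indeed be read as a supremum, which causes no trouble.
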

We have stated a simplification of Ky Fan's theorem (for the original version, see the paper~\cite{KyFan}\footnote{Our simplification may be an earlier version of the minimax theorem, it may be a result some other mathematician.}).
\begin{Th}[Adams's theorem]\label{AdamsTheorem}
Let~$a \in (0,d)$ be a fixed number. Then\textup,
\begin{equation*}
\|I_a[f]\|_{\BMO} \lesssim \|M_a[f]\|_{L_{\infty}}.
\end{equation*}
If~$f$ is non-negative and~$\int\limits_{\mathbb{R}^d}(1 + |x|)^{-a-d}I_a[f](x)\,dx < \infty$\textup, then
\begin{equation*}
\|M_a[f]\|_{L_{\infty}} \lesssim \|I_a[f]\|_{\BMO} .
\end{equation*}
\end{Th}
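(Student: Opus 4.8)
The final statement is Adams's theorem, relating the fractional maximal operator to the $\BMO$ norm of a Riesz potential; I would treat the two inequalities separately, the second being the delicate one. For $\|I_a[f]\|_{\BMO}\lesssim\|M_a[f]\|_{L_\infty}$ I would argue straight from the definition of $\BMO$. Normalize $\|M_a[f]\|_{L_\infty}\le1$, fix a ball $B=B_r(x_0)$, and split $f=f_1+f_2$ with $f_1=f\chi_{2B}$. For the local part, Fubini together with the elementary bound $\int_B|y-z|^{a-d}\,dy\lesssim r^a$, uniform in $z$, gives
\[
\frac1{|B|}\int_B I_a[|f_1|](y)\,dy\lesssim\frac{r^a}{|B|}\int_{2B}|f|\asymp r^{a-d}\int_{2B}|f|\lesssim\|M_a[f]\|_{L_\infty}\le1 ,
\]
so the mean oscillation of $I_a[f_1]$ over $B$ (relative to the constant $0$) is $\lesssim1$. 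For the far part, subtract the constant $I_a[f_2](x_0)$; the mean value theorem applied to the Riesz kernel gives, for $y\in B$,
\[
\bigl|I_a[f_2](y)-I_a[f_2](x_0)\bigr|\lesssim r\int_{|z-x_0|\ge2r}\frac{|f(z)|}{|z-x_0|^{d-a+1}}\,dz ,
\]
and breaking the domain into the dyadic shells $\{2^kr\le|z-x_0|<2^{k+1}r\}$, $k\ge1$, the $k$-th term is $\lesssim r\,(2^kr)^{-(d-a+1)}(2^{k+1}r)^{d-a}\|M_a[f]\|_{L_\infty}\lesssim2^{-k}$, and the sum over $k$ is $\lesssim1$. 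Adding the two parts bounds the mean oscillation of $I_a[f]$ over an arbitrary ball by a constant.

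For the converse, $\|M_a[f]\|_{L_\infty}\lesssim\|I_a[f]\|_{\BMO}$ when $f\ge0$ and the side condition holds, set $g=I_a[f]$ and fix $B=B_r(x_0)$; the goal is $m:=r^{a-d}\int_Bf\lesssim\|g\|_{\BMO}$. Positivity of the kernel gives the pointwise lower bound $g(y)\ge c_a(2r)^{a-d}\int_Bf\gtrsim m$ for $y\in B$, so every average of $g$ over $B$ (or a concentric sub-ball) is $\gtrsim m$; if the best additive constant for $B$ were $\lesssim m$ we would already conclude $\mathrm{osc}_B(g)\gtrsim m$. The difficulty is the opposite case, where $g$ is large throughout $B$: then the average of $g$ over $B$ can exceed $m$ by a large (even logarithmic) factor — as it does near a logarithmic singularity of $g$, e.g.\ for $g=I_a[\,|\cdot|^{-a}\chi_{B_1(0)}\,]$ at the origin, where $m$ stays bounded while $g_{B_r(0)}\asymp\log(1/r)$. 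This is where the side condition intervenes. From the exact radial identity
\[
g(x)=c_{a,d}\int_0^\infty m_s(x)\,\frac{ds}{s},\qquad m_s(x)=s^{a-d}\int_{B_s(x)}f ,
\]
and the fact that the hypothesis is easily seen to be equivalent to $\int f(z)(1+|z|)^{a-d}\,dz<\infty$, one checks that $m_s(x)\to0$ as $s\to\infty$; hence along the dyadic chain $B_{2^kr}(x_0)$, whose consecutive averages of $g$ differ by $\lesssim\|g\|_{\BMO}$, the averages cannot remain large, and one would like to telescope from a large anchoring scale back down to scale $r$, bounding $g_{B_r(x_0)}$, and thus $m$, by $\|g\|_{\BMO}$.

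The crux — and the main obstacle — is to make this last step quantitative with a constant depending on nothing but $a$ and $d$; crude telescoping of ball averages loses a factor proportional to the number of scales traversed. I would instead estimate $m$ against an oscillation of $g$ at the scale where the mass of $f$ actually sits: split $f$ at a dilate $B_{Kr}(x_0)$ with $K=K(a,d)$ chosen so that the far tail of the kernel is negligible, bound the near contribution to $g_B$ by a geometric sum $\sum_k2^{-k}m_{2^kr}$ of the same densities at coarser scales, and read the resulting inequality as a descending induction on scale that terminates once $m_s$ has become small by the side condition. Checking that this bootstrap closes with uniform constants — morally, turning the heuristic ``a concentration of $f$ of mass $\asymp\lambda\,r^{d-a}$ at scale $r$ forces $\mathrm{osc}(g)\gtrsim\lambda$ at a comparable scale'' into a rigorous estimate — is the technical heart of this direction.
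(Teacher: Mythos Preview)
The paper does not prove Adams's theorem at all: it is listed in the final ``Our tools'' section and simply attributed to~\cite{Adams75}. So there is no proof in the paper to compare your attempt against.

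Your argument for the first inequality~$\|I_a[f]\|_{\BMO}\lesssim\|M_a[f]\|_{L_\infty}$ is correct and is exactly the standard one: split into near and far parts, bound the near part in $L^1$-average by Fubini, and bound the oscillation of the far part via the mean value inequality and a dyadic shell decomposition.

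For the converse inequality, you yourself say the argument is not closed, and indeed it is not. One concrete problem: the ``geometric sum $\sum_k 2^{-k}m_{2^kr}$'' you invoke for the near contribution to~$g_B$ is not what one actually gets. A direct Fubini computation shows that the contribution to~$g_B$ from~$f$ restricted to the shell $B_{2^{k+1}r}\setminus B_{2^kr}$ is~$\asymp (2^kr)^{a-d}\int_{B_{2^{k+1}r}\setminus B_{2^kr}}f\lesssim m_{2^{k+1}r}$, with \emph{no} factor~$2^{-k}$; the factor~$2^{-k}$ appears only when you estimate the \emph{oscillation} of the tail (that extra derivative of the kernel is what buys the decay). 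So the iteration you sketch does not close as stated: the recursion you obtain is~$m_r\lesssim\|g\|_{\BMO}+\sum_{k\geq 1}m_{2^kr}$ with non-summable coefficients, which says nothing. Adams's actual argument avoids this by comparing the averages of~$g$ over~$B_r$ and over a \emph{single} fixed dilate~$B_{Nr}$ (rather than telescoping through all scales) and extracting~$m$ from that one oscillation; the side condition is used only to guarantee that the relevant averages are finite, not to provide a quantitative decay rate for~$m_s$.
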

This theorem was proved in the paper~\cite{Adams75}.

Dmitriy M. Stolyarov

Institute for Mathematics, Polish Academy of Sciences, Warsaw;

P. L. Chebyshev Research Laboratory, St. Petersburg State University;

St. Petersburg Department of Steklov Mathematical Institute, Russian Academy of Sciences (PDMI RAS).

\medskip

dms at pdmi dot ras dot ru, dstolyarov at impan dot pl.

\medskip

http://www.chebyshev.spb.ru/DmitriyStolyarov.
\end{document}